\documentclass[12pt]{amsart}
\usepackage{amscd,amssymb}
\usepackage{amsthm,amsmath,amssymb}
\usepackage[matrix,arrow,curve]{xy}
\usepackage{color}
\sloppy\pagestyle{plain}

\textwidth=16cm \textheight=23cm

\addtolength{\topmargin}{-40pt} \addtolength{\oddsidemargin}{-2cm}
\addtolength{\evensidemargin}{-2cm}

\newtheorem{theorem}[equation]{Theorem}
\newtheorem{lemma}[equation]{Lemma}
\newtheorem{corollary}[equation]{Corollary}

\theoremstyle{definition}
\newtheorem{example}[equation]{Example}
\newtheorem{definition}[equation]{Definition}

\theoremstyle{remark}
\newtheorem{remark}[equation]{Remark}

\makeatletter\@addtoreset{equation}{section}
\makeatother

\title{Alpha-invariants and purely log terminal blow-ups}

\author{Ivan Cheltsov, Jihun Park, and Constantin Shramov}

\address{ \emph{Ivan Cheltsov}\newline \textnormal{School of Mathematics, The
University of Edinburgh
\newline \medskip  Edinburgh EH9 3JZ, UK
\newline
National Research University Higher School of Economics, Laboratory of Algebraic Geometry,
\newline
6 Usacheva street, Moscow, 117312, Russia
\newline
\texttt{I.Cheltsov@ed.ac.uk}}}

\address{ \emph{Jihun Park}\newline \textnormal{Center for Geometry and Physics, Institute for Basic Science
\newline \medskip 77 Cheongam-ro, Nam-gu, Pohang, Gyeongbuk, 37673, Korea \newline
Department of
Mathematics, POSTECH
\newline
77 Cheongam-ro, Nam-gu, Pohang, Gyeongbuk,  37673, Korea \newline
\texttt{wlog@postech.ac.kr}}}

\address{\emph{Constantin Shramov}\newline \textnormal{Steklov Mathematical Institute of Russian Academy of Sciences
\newline
\medskip 8 Gubkina street, Moscow, 119991, Russia
\newline
National Research University Higher School of Economics, Laboratory of Algebraic Geometry,
\newline
6 Usacheva street, Moscow, 117312, Russia
\newline \texttt{costya.shramov@gmail.com }}}

\thanks{We assume that all varieties are defined over the field $\mathbb{C}$.}

\begin{document}

\begin{abstract}
We prove that the sum of the $\alpha$-invariants of two different Koll\'ar components of a Kawamata log terminal singularity is less than~$1$.
\end{abstract}

\maketitle

Let $V$ be a normal irreducible projective variety of dimension $n\geqslant 1$, and let $\Delta_V$ be an effective $\mathbb{Q}$-divisor on $V$.
Write
$$
\Delta_V=\sum_{i=1}^{r}a_i\Delta_i,
$$
where each $\Delta_i$ is a prime divisor, and each $a_i$ is a positive rational number.
Suppose that the log pair $(V,\Delta_{V})$ has at most Kawamata log terminal singularities.
Then, in particular, {each $a_i$ is less than $1$}.
Suppose also that the divisor $-(K_{V}+\Delta_V)$ is ample, so that~\mbox{$(V,\Delta_{V})$} is a log Fano variety.
Finally, suppose that $V$ is faithfully acted on by a finite group $G$ such that the divisor $\Delta_{V}$ is $G$-invariant.
Let $\alpha_G(V,\Delta_V)$ be the real number defined by
$$
\mathrm{sup}\left\{\lambda\in\mathbb{Q} \left|\ %
\aligned &\text{the pair}\ \big(V,\Delta_{V}+\lambda D_{V}\big)\ \text{has Kawamata log terminal singularities}\\
&\text{for every~$G$-invariant and effective $\mathbb{Q}$-divisor}\  D_{V}\sim_{\mathbb{Q}}-\Big(K_{V}+\Delta_{V}\Big)\\
\endaligned\right.\right\}.%
$$
This number is known as the $\alpha$-invariant of the log Fano variety $(V,\Delta_V)$,
or its global log canonical threshold (see \cite[Definition~3.1]{ChSh09}).
If $G$ is trivial, we put $\alpha(V,\Delta_{V})=\alpha_{G}(V,\Delta_{V})$.
\begin{example}\label{example:P1-log}
 The divisor $-(K_{\mathbb{P}^1}+\Delta_{\mathbb{P}^1})$ is ample if and only if $\sum_{i=1}^{r}a_i<2$.
One has
$$
\alpha(\mathbb{P}^1,\Delta_{\mathbb{P}^1})=\frac{1-\mathrm{max}(a_1,\ldots,a_r)}{2-\sum_{i=1}^{r}a_i}.
$$
\end{example}

We put $\alpha_G(V)=\alpha_{G}(V,\Delta_{V})$ if $\Delta_{V}=0$.

\begin{example}\label{example:P1-groups}
A finite group
$G$ acting faithfully on $\mathbb{P}^1$ is one of the following finite groups: the alternating group $\mathfrak{A}_5$, the symmetric group $\mathfrak{S}_4$, the alternating group
$\mathfrak{A}_4$, the dihedral group $\mathfrak{D}_{2m}$ of order $2m$, or the cyclic group $\boldsymbol{\mu}_{m}$ of order $m$ (including the case $m=1$, that is, the trivial group). The number $\frac{\alpha_G(\mathbb{P}^1)}{2}$ is equal to the length of the smallest $G$-orbit in~$\mathbb{P}^1$,
which gives
$$
\alpha_G\big(\mathbb{P}^1\big)=\left\{%
\aligned
&6 \mbox{ if $G\cong\mathfrak{A}_5$,}\\%
&3 \mbox{ if $G\cong\mathfrak{S}_4$,}\\%
&2 \mbox{ if $G\cong\mathfrak{A}_{4}$,}\\%
&1 \mbox{ if $G\cong\mathfrak{D}_{2m}$,}\\%
&\frac{1}{2} \mbox{ if $G\cong\boldsymbol{\mu}_{m}$.}\\%
\endaligned\right.%
$$
\end{example}

If both $\Delta_{V}=0$ and $G$ is trivial, we put $\alpha(V)=\alpha_{G}(V,\Delta_{V})$.
This is the most classical case.
Namely, if $V$ is a smooth Fano variety, then by \cite[Theorem~A.3]{ChSh08c} the number $\alpha(V)$ coincides
with the $\alpha$-invariant of $V$ defined by Tian in \cite{Tian}.
Its values were found or estimated in many cases.
For example, in the toric case the explicit formula for $\alpha(V)$ is given by Cheltsov and Shramov in \cite[Lemma~5.1]{ChSh08c}.
It gives $\alpha(\mathbb{P}^n)=\frac{1}{n+1}$, which can also be verified by an easy explicit computation.

The $\alpha$-invariants
of all del Pezzo surfaces with at worst Du Val singularities were computed
in~\mbox{\cite{Ch07b,Ch07c,Shi,Park-Won,Park-Won-PEMS,CheltsovKosta}}.
Furthermore, the $\alpha$-invariants of many non-Gorenstein singular del Pezzo surfaces that are quasi-smooth well-formed complete intersections in weighted projective spaces
were computed in~\mbox{\cite{ChPaSh10,ChSh2013zoo,KimPark}}.
The $\alpha$-invariants of many smooth and singular Fano threefolds were computed or estimated in \cite{JohnsonKollar,ChSh08c,Ch07a,Ch09a,Ch09b,KimOkadaWon}.
The $\alpha$-invariants of smooth Fano hypersurfaces were estimated in \cite{Ch01b,ChPa02,Pukhlikov,CheltsovParkWon}.

The $\alpha$-invariant plays an important role in K\"ahler geometry.
If $V$ is a smooth Fano variety, then $V$ admits a $G$-invariant K\"ahler--Einstein metric provided that
$$
\alpha_G\big(V\big)>\frac{\dim(V)}{\dim(V)+1}.
$$
This was proved by Tian in \cite{Tian}.
In \cite{Fujita2016}, this result was improved by Fujita.
He proved that $V$ admits a K\"ahler--Einstein metric if it is smooth and $\alpha(V)\geqslant\frac{\dim(V)}{\dim(V)+1}$.
In particular, all smooth hypersurfaces in $\mathbb{P}^{d}$ of degree $d$ are K\"ahler--Einstein,
because their $\alpha$-invariants are at least $\frac{d-1}{d}$ by \cite{Ch01b,ChPa02}.

The K-stability
of the log Fano variety $(V,\Delta_V)$ crucially depends on $\alpha(V,\Delta_V)$.
For instance, if
$$
\alpha\big(V,\Delta_V\big)<\frac{1}{\dim(V)+1},
$$
then the log Fano variety $(V,\Delta_V)$ is K-unstable by \cite[Theorem~3.5]{FujitaOdaka2016} and \cite[Lemma~5.5]{Fujita2017}.
This bound is sharp, since $\mathbb{P}^n$ is K-semistable and $\alpha(\mathbb{P}^n)=\frac{1}{n+1}$.
Vice versa, if~\mbox{$\alpha(V,\Delta_V)\geqslant\frac{\dim(V)}{\dim(V)+1}$},
then the log Fano variety $(V,\Delta_V)$ is K-semistable by \cite[Theorem~1.4]{OdakaSano} and \cite[Proposition~2.1]{Fujita2017plt}.

The $\alpha$-invariant also plays an important role in birational geometry.
It was first observed by Park in \cite{Park2001}, {where he proved a theorem that evolved into the following:}

\begin{theorem}[{{\cite[Theorem~1.5]{Ch07c}}}]
\label{theorem:Park}
Let $X$ be a variety with at most  terminal $\mathbb{Q}$-factorial singularities.
Suppose that there is a {proper} morphism $\phi\colon X\to Z$ such that $Z$ is a {smooth} curve, and $-K_X$ is $\phi$-ample.
Let $P$ be a point in $Z$, and let $E_X$ be the scheme fiber of $\phi$ over $P$.
Suppose that $E_X$ is irreducible, reduced, normal, and has at most Kawamata log terminal singularities,
so that $E_X$ is a Fano variety by the adjunction formula.
Suppose also that there is a commutative diagram
$$
\xymatrix{
X\ar@{->}[drr]_{\phi}\ar@{-->}[rrrr]^{\rho}&&&&\ Y\ar@{->}[dll]^{\psi}\\%
&&Z&&}
$$
such that $Y$ is a variety with at most  terminal $\mathbb{Q}$-factorial singularities,
$\psi$ is a {proper} morphism, the divisor $-K_{Y}$ is $\psi$-ample,
and $\rho$ is a birational map that induces an isomorphism
$$
X\setminus\mathrm{Supp}\big(E_X\big)\cong Y\setminus\mathrm{Supp}\big(E_Y\big),
$$
where $E_Y$ is the scheme fiber of $\psi$ over $P$.
Suppose, in addition, that $E_Y$ is irreducible.
Then $\rho$ is an isomorphism provided that~\mbox{$\alpha(E_X)\geqslant 1$}.
Moreover, if $E_Y$ is reduced, normal and has at most Kawamata log terminal singularities,
then $\rho$ is an isomorphism provided that $\alpha(E_X)+\alpha(E_Y)>1$.
\end{theorem}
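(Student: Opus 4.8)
The plan is to argue by contradiction: I would assume that $\rho$ is not an isomorphism and then exhibit an effective $\mathbb{Q}$-divisor on $E_X$ that is $\mathbb{Q}$-linearly equivalent to $-K_{E_X}$ but fails to be log canonical after scaling by a factor strictly smaller than $1$, contradicting the hypothesis on $\alpha(E_X)$; for the second assertion I would, in addition, produce a companion $\mathbb{Q}$-divisor on $E_Y$, the two singularity estimates summing to exactly $1$ and thereby contradicting $\alpha(E_X)+\alpha(E_Y)>1$.

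First I would determine the shape of $\rho$. If $\rho$ were an isomorphism in codimension one, then $\rho_{*}(-K_X)=-K_Y$; since $-K_X$ is ample over $Z$ and $-K_Y$ is ample over $Z$, while spaces of sections of a divisor are unaffected by modifications in codimension one on normal varieties, the relative anticanonical algebras of $X/Z$ and $Y/Z$ would coincide, and hence $X\cong Y$ over $Z$, contrary to assumption. So the non-trivial $\rho$ must contract a prime divisor, and — since $\rho$ is an isomorphism away from $E_X$ and $E_X$ is irreducible — that divisor is $E_X$; symmetrically $\rho^{-1}$ contracts $E_Y$. I would then fix a common log resolution $p\colon W\to X$, $q\colon W\to Y$ of the map $\rho$ and denote by $\widetilde{E}_X$ and $\widetilde{E}_Y$ the strict transforms on $W$ of $E_X$ and of $E_Y$; then $\widetilde{E}_X$ is exceptional over $Y$, $\widetilde{E}_Y$ is exceptional over $X$, and the scheme fibre $\mathcal{F}$ of the morphism $W\to Z$ over $P$ coincides both with $p^{*}E_X$ and with $q^{*}E_Y$, so that $\mathrm{mult}_{\widetilde{E}_X}\mathcal{F}=\mathrm{mult}_{\widetilde{E}_Y}\mathcal{F}=1$.

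Now comes the main construction. I would pick $\mu\gg 0$, an ample divisor $B$ on $Z$ supported away from $P$, and a general member $\mathcal{M}_Y$ of the base point free linear system $|{-}\mu K_Y+\psi^{*}B|$, and set $\mathcal{M}_X=\rho^{-1}_{*}\mathcal{M}_Y$. In a neighbourhood of $P$ one has $\mathcal{M}_X\sim_{\mathbb{Q}}-\mu K_X+cE_X$ for some $c\in\mathbb{Q}$, so — using that $E_X|_{E_X}\sim 0$, the adjunction formula, and the vanishing of the different $\mathrm{Diff}_{E_X}(0)$, which holds because $X$ is terminal, hence smooth in codimension two, while $E_X$ is normal, hence smooth in codimension one — for a general member $M_X$ of $\mathcal{M}_X$ the divisor $D_X:=\frac{1}{\mu}M_X\big|_{E_X}$ is effective and satisfies $D_X\sim_{\mathbb{Q}}-K_{E_X}$. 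Next I would compare $p^{*}\!\big(K_X+\frac{1}{\mu}\mathcal{M}_X\big)$ and $q^{*}\!\big(K_Y+\frac{1}{\mu}\mathcal{M}_Y\big)$ on $W$: these agree away from the fibre over $P$ and are numerically trivial over $Z$, hence differ by $s\mathcal{F}$ for some $s\in\mathbb{Q}$. Reading off the coefficient of $\widetilde{E}_X$ gives $s=a\big(\widetilde{E}_X;Y,\frac{1}{\mu}\mathcal{M}_Y\big)=a\big(\widetilde{E}_X;Y\big)>0$, the outer equality because $\mathcal{M}_Y$ is general so $\mathrm{mult}_{\widetilde{E}_X}\mathcal{M}_Y=0$; and reading off the coefficient of $\widetilde{E}_Y$ gives $a\big(\widetilde{E}_Y;X,\frac{1}{\mu}\mathcal{M}_X\big)=-s$. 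Setting $A:=a\big(\widetilde{E}_Y;X\big)>0$, one gets $\frac{1}{\mu}\mathrm{mult}_{\widetilde{E}_Y}\mathcal{M}_X=A+s$, so the pair $\big(X,E_X+\lambda\cdot\frac{1}{\mu}\mathcal{M}_X\big)$ assigns to $\widetilde{E}_Y$ the discrepancy $A-1-\lambda(A+s)$, which equals $-1$ at $\lambda=\frac{A}{A+s}$; this pair is therefore not purely log terminal near $E_X$, whence — by inversion of adjunction, again using $\mathrm{Diff}_{E_X}(0)=0$ — the pair $\big(E_X,\frac{A}{A+s}D_X\big)$ is not Kawamata log terminal. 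Thus
$$
\alpha\big(E_X\big)\ \leqslant\ \frac{A}{A+s}\ =\ \frac{a\big(\widetilde{E}_Y;X\big)}{a\big(\widetilde{E}_Y;X\big)+a\big(\widetilde{E}_X;Y\big)}\ <\ 1 .
$$
This already contradicts $\alpha(E_X)\geqslant 1$, proving the first statement; and interchanging the roles of $X$ and $Y$ yields the twin bound $\alpha(E_Y)\leqslant\frac{a(\widetilde{E}_X;Y)}{a(\widetilde{E}_X;Y)+a(\widetilde{E}_Y;X)}$, so that $\alpha(E_X)+\alpha(E_Y)\leqslant 1$, contradicting the hypothesis of the second statement.

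The step I expect to be the main obstacle is the bookkeeping on $W$: verifying that $\widetilde{E}_X$ and $\widetilde{E}_Y$ occur in the central fibre $\mathcal{F}$ with multiplicity exactly $1$, and correctly extracting the identities $s=a(\widetilde{E}_X;Y)$ and $a(\widetilde{E}_Y;X,\frac{1}{\mu}\mathcal{M}_X)=-s$ from the comparison of the two pulled-back log canonical divisors. Two further points require care: ruling out that $\rho$ is an isomorphism in codimension one (done above via relative anticanonical algebras), and the vanishing of the different $\mathrm{Diff}_{E_X}(0)$, which is precisely what lets the restriction to $E_X$ be treated as if $X$ were smooth along $E_X$. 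One should also record the minor modification needed when the scheme fibre $E_Y$ is non-reduced, which only improves the estimate for $\alpha(E_X)$ in the first statement.
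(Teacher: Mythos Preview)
The paper does not contain a proof of Theorem~\ref{theorem:Park}: the result is quoted from \cite[Theorem~1.5]{Ch07c} and no argument is given here, so there is nothing to compare your proposal against directly.

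That said, your argument is correct, and it is worth setting it beside the paper's proof of the analogous Theorem~\ref{theorem:main}. Both begin by ruling out that $\rho$ is small, transport a base-point-free linear system across $\rho$, and finish with inversion of adjunction; the middle step is where they diverge. You exploit the special feature of the fibration case that $E_X$ and $E_Y$ are fibres over the same point of a curve, so that $p^{*}E_X=q^{*}E_Y=\mathcal{F}$ on a common resolution; comparing $p^{*}\big(K_X+\tfrac{1}{\mu}\mathcal{M}_X\big)$ with $q^{*}\big(K_Y+\tfrac{1}{\mu}\mathcal{M}_Y\big)$ modulo $\mathcal{F}$ then yields the single parameter $s=a(\widetilde{E}_X;Y)$ and the explicit closed bounds $\alpha(E_X)\leqslant A/(A+s)$, $\alpha(E_Y)\leqslant s/(A+s)$, summing to~$1$. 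The paper's proof of Theorem~\ref{theorem:main} cannot use this trick (in the plt blow-up setting the exceptional divisors have negative, not trivial, normal bundle) and instead proceeds via a Negativity Lemma calculation (Lemma~\ref{lemma:alpha-beta-new}) followed by a uniqueness-of-log-canonical-model argument showing that at least one of the two transported pairs must fail to be log canonical. Your route is shorter and produces sharper numerics in the fibration case; the paper's route is the one that generalizes to the non-fibred situation. The step you flag as the main obstacle---that the difference of the two pulled-back log divisors is exactly $s\mathcal{F}$---is indeed the only delicate point, and it is the standard fact that a vertical $\mathbb{Q}$-divisor over a smooth curve which is numerically trivial over the base is a rational multiple of the full fibre.
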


Theorem~\ref{theorem:Park} gives a necessary condition in terms of $\alpha$-invariants
for the existence of a non-biregular fiberwise birational transformation of a Mori fibre space over a curve.
It follows from \cite[Theorem~1.1]{Krylov} that this condition is not a sufficient condition.
Nevertheless, the bound is  sharp (one can find many examples in \cite{Park2001,Park2002}).

\begin{example}
\label{example:P1}
Let $S$ be a $\mathbb{P}^1$-bundle over a curve. Then we have an elementary transformation to another $\mathbb{P}^1$-bundle over the same curve. Note that the $\alpha(\mathbb{P}^1)=\frac{1}{2}$ by Example~\ref{example:P1-groups}.
\end{example}

\begin{example}[{\cite[Example~5.8]{Co96}}]
\label{example:dP3-D4}
Let $S$ be a smooth cubic surface in $\mathbb{P}^3$
with an Eckardt point $O$. Denote by $L_{1}$, $L_{2}$, $L_{3}$ the lines in $S$ passing through $O$.
Put $X=S\times\mathbb{A}^{1}$, and let $\phi$ be the natural projection~\mbox{$X\to\mathbb{A}^{1}$}.
Let us identify $S$ with a fiber of $\phi$.
Then there is a commutative~diagram
$$
\xymatrix{
&U\ar@{->}[dl]_{\alpha}\ar@{-->}[rr]^{\psi}&&\overline{U}\ar@{->}[dr]^{\beta}&\\%
X\ar@{->}[drr]_{\phi}\ar@{-->}[rrrr]^{\rho}&&&&\ Y\ar@{->}[dll]^{\psi}\\%
&&\mathbb{A}^{1}&&}
$$ %
where $\alpha$ is the blow up of the point $O$, the map $\psi$ is the anti-flip along the
proper transforms of the curves $L_{1}$, $L_{2}$, $L_{3}$, and~$\beta$ is the contraction of the~proper transform of the surface $S$.
The scheme fiber of $\psi$ over the point $\phi(S)$ is a cubic surface in $\mathbb{P}^3$
that has one singular point of type $\mathbb{D}_{4}$.
Its $\alpha$-invariant is $\frac{1}{3}$ (see \cite[Theorem~1.4]{Ch07c}).
On the other hand, we have
$\alpha(S)=\frac{2}{3}$ (see \cite[Theorem~1.7]{Ch07b}).
\end{example}

\begin{example}[{\cite[Example~5.3]{Park2001}}]
\label{example:Jihun}
Let $X$ and $Y$ be subvarieties in $\mathbb{A}^{1}\times\mathbb{P}^{3}$ given by eqautions
$$
x^{3}+y^{2}z+z^{2}w+t^{12}w^{3}=0\quad \text{and}\quad x^{3}+y^{2}z+z^{2}w+w^{3}=0,
$$
respectively, where $t$ is a coordinate on $\mathbb{A}^{1}$, and
$(x:y:z:w)$ are homogeneous coordinates on $\mathbb{P}^{3}$. Then the projections
$\phi\colon X\to\mathbb{A}^{1}$ and
$\psi\colon Y\to\mathbb{A}^{1}$ are fibrations into cubic surfaces,
and the map
$$
(t,x,y,z,w)\mapsto(t,t^{2}x,t^{3}y,z,t^{6}w)
$$
gives a non-biregular birational fiberwise map $\rho\colon X\dasharrow Y$.
The fiber of $\phi$ over the point $t=0$ is a cubic surface that has one Du Val singular point of type $\mathbb{E}_{6}$,
so that its $\alpha$-invariant is $\frac{1}{6}$ (see \cite[Theorem~1.4]{Ch07c}),
and the  fiber of $\psi$ over the point $t=0$ is a smooth cubic surface with an Eckardt point,
so that its $\alpha$-invariant is $\frac{2}{3}$ (see \cite[Theorem~1.7]{Ch07b}).
\end{example}

The $\alpha$-invariant also plays an important role in singularity theory.
Let $U\ni P$ be a germ of a Kawamata log terminal singularity.
Then it follows from \cite[Lemma~1]{Xu2014} {(cf. \cite[Proposition~2.12]{Prokhorov})}
that there is a birational morphism $\phi\colon X\to U$ such that
its exceptional locus consists of a single prime divisor $E_X$ such that $\phi(E_X)=P$,
the log pair $(X,E_X)$ has purely log terminal singularities,
and the divisor $-(K_X+E_X)$ is $\phi$-ample.
Then
$$
-\big(K_X+E_X\big)\sim_{\mathbb{Q}} -\delta_X E_X
$$
for some positive rational number $\delta_X$.
Recall from \cite[Definition~2.1]{Prokhorov} that the birational morphism $\phi\colon X\to U$ is a \emph{purely log terminal blow-up} of the singularity $U\ni P$.

By \cite[Theorem~7.5]{Ko97}, the divisor $E_X$ is a normal variety that has rational singularities.
Moreover, it can be naturally equipped with a structure of a log Fano variety.
Let $R_{1},\ldots,R_{s}$ be all the irreducible components of the locus $\mathrm{Sing}(X)$ of codimension $2$ that are contained in $E_X$.
Put
$$
\mathrm{Diff}_{E_X}\big(0\big)=\sum_{i=1}^{s}\frac{m_{i}-1}{m_{i}}R_{i},
$$
where $m_{i}$ is the~smallest positive integer such that the divisor $m_{i}E_X$ is~Cartier in a general point of $R_{i}$.
Then $\mathrm{Diff}_{E_X}(0)$ is usually called the \emph{different} of the pair $(X,E_X)$.
One has
$$
-\delta_XE_X\Big\vert_{E_X}\sim_{\mathbb{Q}}-\Big(K_X+E_X\Big)\Big\vert_{E_X}\sim_{\mathbb{Q}} -\left(K_{E_X}+\mathrm{Diff}_{E_X}(0)\right).
$$
Furthermore, the singularities of the log pair $(E_X,\mathrm{Diff}_{E_X}(0))$ are Kawamata log terminal by Adjunction (
see \cite[3.2]{Sho93} or \cite[17.6]{Asterisque}).
This means that \mbox{$(E_X,\mathrm{Diff}_{E_X}(0))$} is a log Fano variety with Kawamata log terminal singularities,
because $-E_X$ is $\phi$-ample.

\begin{definition}[{cf. \cite[Definition~1.1]{LiXu}}]
\label{definition:Kollar-component}
The log Fano variety $(E_X,\mathrm{Diff}_{E_X}(0))$ is a Koll\'ar component of~\mbox{$U\ni P$}.
\end{definition}

Let us show how to compute $\alpha(E_X,\mathrm{Diff}_{E_X}(0))$ in three  simple cases.

\begin{example}[{{cf. \cite[Example~2.4]{Prokhorov}}}]
\label{example:Du-Val-different}
Let $U\ni P$ be a germ of a Du Val singularity, and $f\colon W\to U$ be the minimal resolution of this singularity.
Then the exceptional curves of $f$ are smooth rational curves whose self-intersections are $-2$,
and their dual graph is of type $\mathbb{A}_m$, $\mathbb{D}_m$, $\mathbb{E}_6$, $\mathbb{E}_7$, or $\mathbb{E}_8$.
Let $E_W$ be one of the exceptional curves that is chosen as follows.
If $U\ni P$ is not a singularity of type $\mathbb{A}_m$, let $E_W$ be the only $f$-exceptional curve
that intersects three other $f$-exceptional curves, i.e., $E_W$ is the ``fork'' of the dual graph.
If $U\ni P$ is a singularity of type $\mathbb{A}_m$, choose $E_W$ to be the $k$-th curve in the dual graph.
In this case, we may assume that $k\leqslant\frac{m+1}{2}$.
In all the cases, there exists a commutative diagram
$$
\xymatrix{
W\ar@{->}[rrd]_{f}\ar@{->}[rrrr]^{h}&&&& Y\ar@{->}[lld]^{g}\\
&& U &&}
$$
where $h$ is the contraction of all $f$-exceptional curves except $E_W$,
and $g$ is the contraction of the proper transform of $E_W$ on the surface $Y$.
Denote the $g$-exceptional curve by $E_Y$.
Then $Y$ has at most Du Val singularities of type~$\mathbb{A}$, the curve $E_Y$ is smooth,
and it contains all the singular points of the surface $Y$, if any.
One can check that the log pair~\mbox{$(Y,E_Y)$} has purely log terminal singularities
(see~\cite[Theorem~4.15(3)]{KoMo98}). Also, the divisor $-(K_Y+E_Y)$ is $g$-ample.
Thus, the curve $E_Y$ is a Koll\'ar component of the singularity $U\ni P$.
{Moreover, $$
\mathrm{Diff}_{E_Y}(0)=
\left\{\begin{array}{ll}
 0 &  \text{in the case of } \mathbb{A}_1,\\
\frac{m-1}{m}P_{m-1}& \text{in the case of } \mathbb{A}_m \text{ and }  k=1,\\
\frac{k-1}{k}P_{k-1}+\frac{m-k}{m-k+1}Q_{m-k}& \text{in the case of } \mathbb{A}_m \text{ and }  2\leqslant k\leqslant\frac{m+1}{2},\\
\frac{1}{2}P_1+\frac{1}{2}Q_1+\frac{m-3}{m-2}R_{m-3}& \text{in the case of } \mathbb{D}_m,\\
\frac{1}{2}P_1+\frac{2}{3}Q_2+\frac{m-4}{m-3}R_{m-4}& \text{in the case of } \mathbb{E}_m,  \\
              \end{array}
          \right.
$$
where $P_i$, $Q_j$, and $R_\ell$ are singular points of $Y$ that lie on $E_Y$. The singular point $P_i$ (resp. $Q_j$ and $R_\ell$)  is a Du Val singular point of type $\mathbb{A}_{i}$ (resp. $\mathbb{A}_{j}$ and $\mathbb{A}_{\ell}$). Since $E_Y\cong\mathbb{P}^1$, it follows from Example~\ref{example:P1-log} that
$$
\alpha\big(E_Y,\mathrm{Diff}_{E_Y}(0)\big)=
\left\{\begin{array}{ll}
 \frac{k}{m+1}\leqslant \frac{1}{2} &  \text{in the case of } \mathbb{A}_m,\\
1& \text{in the case of } \mathbb{D}_m, \\
2& \text{in the case of } \mathbb{E}_6, \\
3& \text{in the case of } \mathbb{E}_7, \\
6& \text{in the case of } \mathbb{E}_8. \\
              \end{array}
          \right.
$$}
\end{example}

\begin{example}
\label{example:A1}
{Let $U\ni P$ be a germ of a Du Val singularity of type $\mathbb{A}_m$,
and let $f\colon W\to U$ be the minimal resolution of this singularity.}

{Let $Q$ be a point on one of the $f$-exceptional
curves. We consider two cases, one is the case where the point $Q$ belongs to one of the two exceptional curves that correspond to ``tails'' of the dual graph  but it is not contained in any other exceptional curve, the other is the case where $Q$ is the intersection point of the $k$-th and $(k+1)$-th
$f$-exceptional curves, $1\leqslant k\leqslant \frac{m}{2}$.}

{Let $\xi\colon \widehat{W}\to W$ be the blow up at  $Q$,
and $\zeta$ be the contraction of the proper transforms of all  the $f$-exceptional
curves. Thus, there exists a commutative diagram
$$
\xymatrix{
\widehat{W}\ar@{->}[rrd]_{f\circ\xi}\ar@{->}[rrrr]^{\zeta}&&&& Y\ar@{->}[lld]^{g}\\
&& U&&}
$$
Denote the $g$-exceptional curve by $E_Y$.  It is a smooth rational curve. The dual graphs of the exceptional curves of the minimal resolution
of singularities $\zeta\colon \widehat{W}\to Y$ are chains such that the self-intersection numbers of the exceptional curves are $-3, -2, \ldots, -2$,
and the proper transform of $E_Y$ intersects only the ``tail'' components of these chains.
In the former case, $Y$ has a unique singular point $O$, which is a quotient of $\mathbb{C}^2$ by the cyclic group $\boldsymbol{\mu}_{2m+1}$. In the latter case, it contains two singular points $P_1$ and $P_2$, which are quotients of $\mathbb{C}^2$ by the cyclic groups $\boldsymbol{\mu}_{2k+1}$ and $\boldsymbol{\mu}_{2(m-k)+1}$, respectively.}

{By~\cite[Theorem~4.15(3)]{KoMo98} the log pair~\mbox{$(Y,E_Y)$} has purely log terminal singularities. Also, the divisor $-(K_Y+E_Y)$ is $g$-ample.
Thus, the curve $E_Y$ is a Koll\'ar component of the singularity $U\ni P$.
Moreover, $$
\mathrm{Diff}_{E_Y}(0)=
\left\{\begin{array}{ll}
\frac{2m}{2m+1}O& \text{in the former case},\\
\frac{2k}{2k+1}P_{1}+\frac{2(m-k)}{2(m-k)+1}P_2& \text{in the latter case}.\\
              \end{array}
          \right.
$$
Therefore,
$$
\alpha\big(E_Y,\mathrm{Diff}_{E_Y}(0)\big)=
\left\{\begin{array}{ll}
\frac{1}{2m+2}<\frac{1}{2}& \text{in the former case},\\
\frac{2k+1}{2m+2}\leqslant\frac{1}{2}& \text{in the latter case}.\\
              \end{array}
          \right.
$$
In particular, in the latter case we see that
$\alpha(E_Y,\mathrm{Diff}_{E_Y}(0))=\frac{1}{2}$ if and only if $m$ is even,
and $Q$ is the ``central point'' of the configuration of the $f$-exceptional curves.}
\end{example}

It is easy to see from \cite[Theorem~4.15]{KoMo98} that if $U\ni P$ is a Du Val singularity of type~$\mathbb{D}$
or~$\mathbb{E}$, and the exceptional curve $E_W$ in Example~\ref{example:Du-Val-different} is not the one corresponding to the ``fork'' of the dual graph,
then the curve $E_Y$ is not a Koll\'ar component  {(see \cite[Example~4.7]{Prokhorov})}.
We will see later that in these cases the singularity $U\ni P$ has a unique Koll\'ar component,
which is  described in Example~\ref{example:Du-Val-different}.
This is not true in general, i.e., a Koll\'ar component of a singularity $U\ni P$ may not be unique,
as one can see from Examples~\ref{example:Du-Val-different} and ~\ref{example:A1}.
Nevertheless, Li and Xu established in \cite[Theorem~B]{LiXu} the following:

\begin{theorem}\label{theorem:Li-Xu}
A K-semistable Koll\'ar component of $U\ni P$ is unique if it exists.
\end{theorem}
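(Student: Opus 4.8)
We sketch a strategy for Theorem~\ref{theorem:Li-Xu}, based on Li's normalized volume function. Put $n=\dim(U)$. For a real valuation $v$ of the function field $\mathbb{C}(U)$ with center $P$ on $U$, recall the log discrepancy $A_{U,P}(v)$ and the volume
$$
\mathrm{vol}_{U,P}(v)=\limsup_{m\to\infty}\frac{\dim_{\mathbb{C}}\big(\mathcal{O}_{U,P}\big/\{f\,:\,v(f)\geqslant m\}\big)}{m^{n}/n!},
$$
and set $\widehat{\mathrm{vol}}_{U,P}(v)=A_{U,P}(v)^{n}\cdot\mathrm{vol}_{U,P}(v)$. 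The first step is the dictionary between Kollár components and divisorial valuations: a Kollár component coming from a purely log terminal blow-up $\phi\colon X\to U$ is recorded by the divisorial valuation $\mathrm{ord}_{E_X}$, two Kollár components give proportional valuations if and only if the corresponding prime divisors over $U$ coincide, and in that case the log Fano structures $(E_X,\mathrm{Diff}_{E_X}(0))$ agree. Hence it suffices to show that at most one Kollár-component valuation (up to rescaling) can carry a K-semistable log Fano structure.

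The main input is the characterization: for a Kollár component $E_X$ normalized so that $A_{U,P}(\mathrm{ord}_{E_X})=1$, the log Fano pair $(E_X,\mathrm{Diff}_{E_X}(0))$ is K-semistable if and only if $\mathrm{ord}_{E_X}$ is a global minimizer of $\widehat{\mathrm{vol}}_{U,P}$. The direction we need, namely that K-semistability forces minimality, would follow by combining Fujita's valuative criterion on $(E_X,\mathrm{Diff}_{E_X}(0))$, which via the cone construction controls $\widehat{\mathrm{vol}}_{U,P}$ of all valuations monomial with respect to $\mathrm{ord}_{E_X}$, with the fact that the degeneration of $U\ni P$ to the orbifold cone over a K-semistable log Fano is a K-semistable Fano cone singularity; then $\widehat{\mathrm{vol}}$ does not decrease under this degeneration, the cone vertex valuation $\mathrm{wt}_{\xi}$ computes the minimum on the cone, and $\widehat{\mathrm{vol}}_{U,P}(\mathrm{ord}_{E_X})$ equals that minimum. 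Granting this, if $E_1$ and $E_2$ are two K-semistable Kollár components, then both $\mathrm{ord}_{E_1}$ and $\mathrm{ord}_{E_2}$ minimize $\widehat{\mathrm{vol}}_{U,P}$.

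The last step is to deduce that $\mathrm{ord}_{E_1}$ and $\mathrm{ord}_{E_2}$ are proportional. Here I would degenerate $U\ni P$ to the cone $U_0\ni o$ over $(E_1,\mathrm{Diff}_{E_1}(0))$ using the filtration of $\mathcal{O}_{U,P}$ by $\mathrm{ord}_{E_1}$; this cone carries a torus action whose vertex valuation $\mathrm{wt}_{\xi_1}$ is the image of $\mathrm{ord}_{E_1}$ and minimizes $\widehat{\mathrm{vol}}_{U_0,o}$. The valuation $\mathrm{ord}_{E_2}$ degenerates (by taking initial terms) to a valuation $w$ on $U_0$ with $\widehat{\mathrm{vol}}_{U_0,o}(w)\leqslant\widehat{\mathrm{vol}}_{U,P}(\mathrm{ord}_{E_2})=\widehat{\mathrm{vol}}_{U,P}(\mathrm{ord}_{E_1})=\widehat{\mathrm{vol}}_{U_0,o}(\mathrm{wt}_{\xi_1})$, so $w$ is again a minimizer on $U_0$. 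On the K-semistable Fano cone $U_0$ one can replace $w$ by a torus-invariant minimizer and then use the convexity of the normalized volume along the Newton--Okounkov body attached to $\mathrm{wt}_{\xi_1}$, together with the K-semistability of $U_0$, to force the equality case and conclude that $w$ is a rescaling of $\mathrm{wt}_{\xi_1}$; untwisting the degeneration then gives that $\mathrm{ord}_{E_2}$ is a rescaling of $\mathrm{ord}_{E_1}$. Since two distinct prime divisors over $U$ cannot have proportional divisorial valuations, $E_1$ and $E_2$ define the same divisor over $U$, and by the first step they give the same Kollár component.

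The principal obstacle is this last uniqueness-of-the-minimizer argument: $\widehat{\mathrm{vol}}_{U,P}$ is not convex on the whole valuation space, so the proof must be transported to the degenerate cone, where the extra torus symmetry and the K-semistability of the cone make the analysis of the equality case tractable. A secondary technical point is setting up the dictionary of the first step precisely, in particular that the log Fano structure on a Kollár component depends only on its associated divisorial valuation.
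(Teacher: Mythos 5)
The paper does not prove this statement at all: Theorem~\ref{theorem:Li-Xu} is quoted verbatim from Li and Xu (\cite[Theorem~B]{LiXu}), and the present article uses it as a black box (indeed, the authors' own Theorem~\ref{theorem:main} gives an independent, much more elementary route to special cases of uniqueness via the inequality $\alpha(E_X,\mathrm{Diff}_{E_X}(0))+\alpha(E_Y,\mathrm{Diff}_{E_Y}(0))\geqslant 1$). So there is no in-paper proof to compare yours against; what you have written is a sketch of the strategy of the cited source itself: encode Koll\'ar components by divisorial valuations, show that a K-semistable Koll\'ar component minimizes the normalized volume $\widehat{\mathrm{vol}}_{U,P}$, degenerate $U\ni P$ to the Fano cone over one component, and analyze the equality case there.

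As a proof, however, your text has genuine gaps: the two pivotal claims are exactly the hard content of Li--Xu and you leave both unestablished. First, the implication ``K-semistable Koll\'ar component $\Rightarrow$ minimizer of $\widehat{\mathrm{vol}}_{U,P}$'' is stated as an ``input'' and only gestured at (Fujita's valuative criterion plus a cone construction plus semicontinuity of $\widehat{\mathrm{vol}}$ under degeneration); each of these steps needs a precise statement and proof, and the inequality you invoke for degenerating valuations (that $\widehat{\mathrm{vol}}$ of the initial-term valuation on the central fiber does not exceed that of the original valuation) requires a careful argument about log discrepancies and volumes under specialization. Second, and more seriously, the uniqueness-of-the-minimizer step --- which you yourself flag as ``the principal obstacle'' --- is not an obstacle you overcome but the theorem itself: $\widehat{\mathrm{vol}}$ is not convex on the valuation space, the appeal to ``convexity along the Newton--Okounkov body'' is not a proof, and the analysis of the equality case on the K-semistable cone (replacing the degenerate valuation by a torus-invariant one, forcing it to be a rescaling of $\mathrm{wt}_{\xi_1}$, and then ``untwisting'' the degeneration to conclude $\mathrm{ord}_{E_2}$ is proportional to $\mathrm{ord}_{E_1}$) is precisely where the substantial work in \cite{LiXu} lies. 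So your proposal is a reasonable roadmap to the literature, but it does not constitute a proof; if your goal is only what this paper needs, note that Corollary~\ref{corollary:Kollar-unique} and the discussion around \eqref{eq:log-Fano-semi-stable} are derived from Theorem~\ref{theorem:main} by a short log-canonical-threshold argument that avoids normalized volumes entirely.
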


The K-semistable Koll\'ar components of two-dimensional Du Val singularities are described in our Examples~\ref{example:Du-Val-different} and~\ref{example:A1}.
They are precisely the Koll\'ar components whose $\alpha$-invariants are at least $\frac{1}{2}$ (cf. \cite[Example~4.7]{Liu}).

Note that Du Val singularities are two-dimensional rational quasi-homogeneous isolated hypersurface singularities.
The K-semistable Koll\'ar components of many three-dimensional rational quasi-homogeneous isolated hypersurface singularities
have been described in \cite{ChPaSh10,ChSh2013zoo}.
Similarly, the K-semistable Koll\'ar components of many four-dimensional rational quasi-homogeneous isolated hypersurface singularities
have been described in \cite{JohnsonKollar}.

The purpose of this paper is to prove the following analogue of Theorem~\ref{theorem:Park}.

\begin{theorem}\label{theorem:main} {Let $U\ni P$ be a germ of a Kawamata log terminal singularity.
Suppose that there is a commutative diagram
$$
\xymatrix{
X\ar@{->}[drr]_{\phi}\ar@{-->}[rrrr]^{\rho}&&&&\ Y\ar@{->}[dll]^{\psi}\\%
&&U&&}
$$
where $\phi :(X\supset E_X)\to ( U\ni P)$  and  $\psi :(Y\supset E_Y)\to ( U\ni P)$ are purely log terminal blow-ups of the germ $U\ni P$. If $$
\alpha\big(E_X,\mathrm{Diff}_{E_X}(0)\big)+\alpha\big(E_Y,\mathrm{Diff}_{E_Y}(0)\big)\geqslant 1,
$$
then $\rho$ is an isomorphism.}
\end{theorem}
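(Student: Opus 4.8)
\section*{A proof proposal}

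The plan is to deduce the theorem from its ``abstract'' form: \emph{if $(E_X,\mathrm{Diff}_{E_X}(0))$ and $(E_Y,\mathrm{Diff}_{E_Y}(0))$ are Koll\'ar components of $U\ni P$ arising from purely log terminal blow-ups that extract \emph{different} divisorial valuations of $\mathbb{C}(U)$, then $\alpha(E_X,\mathrm{Diff}_{E_X}(0))+\alpha(E_Y,\mathrm{Diff}_{E_Y}(0))<1$.} To see this is enough, I would first check that $\rho$ is an isomorphism whenever $\mathrm{ord}_{E_X}=\mathrm{ord}_{E_Y}$: shrinking $U$ around $P$ and letting $N$ be the Cartier index of $E_X$, one has $X=\mathrm{Proj}_U\bigoplus_{k\geqslant 0}\phi_*\mathcal{O}_X(-kNE_X)$ since $-E_X$ is $\phi$-ample and $\mathrm{Exc}(\phi)=E_X$; but the graded algebra equals $\bigoplus_k\{f\in\mathcal{O}_U(U)\ :\ \mathrm{ord}_{E_X}(f)\geqslant kN\}$, so it depends only on $U$ and on the valuation $\mathrm{ord}_{E_X}$. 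Thus $X$ and $Y$ are canonically identified over $U$ once $\mathrm{ord}_{E_X}=\mathrm{ord}_{E_Y}$, and since $\rho$ restricts over $U\setminus P$ to the tautological isomorphism $X\setminus E_X\cong U\setminus P\cong Y\setminus E_Y$, it must coincide with that identification. Hence, if $\rho$ is not an isomorphism, then $\mathrm{ord}_{E_X}\neq\mathrm{ord}_{E_Y}$, and the abstract form gives $\alpha(E_X,\mathrm{Diff}_{E_X}(0))+\alpha(E_Y,\mathrm{Diff}_{E_Y}(0))<1$, contradicting the hypothesis.

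For the abstract form I would first record the numerical data coming from the blow-up structure. Write $\delta_X=A_{(U,0)}(\mathrm{ord}_{E_X})$ for the log discrepancy of $E_X$ over $U$; pushing $-(K_X+E_X)\sim_{\mathbb{Q}}-\delta_XE_X$ down to $U$ identifies this with the number $\delta_X$ of the statement, and likewise $\delta_Y=A_{(U,0)}(\mathrm{ord}_{E_Y})$. On a common log resolution $W$ of $X$, $Y$ and $U$, put $m=\mathrm{ord}_{E_Y}(E_X)$ and $k=\mathrm{ord}_{E_X}(E_Y)$; these are positive because the center of $\mathrm{ord}_{E_Y}$ on $X$ is a closed subset of $E_X$ different from $E_X$, and symmetrically. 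Comparing discrepancies over $U$, $X$ and $Y$ yields
$$
A_{(X,E_X)}\big(\mathrm{ord}_{E_Y}\big)=\delta_Y-\delta_Xm,\qquad A_{(Y,E_Y)}\big(\mathrm{ord}_{E_X}\big)=\delta_X-\delta_Yk,
$$
and the purely log terminal property of $(X,E_X)$ and $(Y,E_Y)$ forces both to be positive; in particular $\delta_Xm<\delta_Y$, $\delta_Yk<\delta_X$, and hence $mk<1$.

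The core of the argument is a valuative/log-canonical-threshold description of the $\alpha$-invariant of a Koll\'ar component. Using Inversion of Adjunction together with relative Kawamata--Viehweg vanishing — the latter to lift an effective anticanonical $\mathbb{Q}$-divisor of $(E_X,\mathrm{Diff}_{E_X}(0))$ from $E_X$ to an effective $\mathbb{Q}$-divisor on $X$ with the prescribed restriction — I would show that
$$
\alpha\big(E_X,\mathrm{Diff}_{E_X}(0)\big)=\min_{\mathcal{B}_X}\ \min_{v}\ \frac{A_{(X,E_X)}(v)}{\mathrm{ord}_v(\mathcal{B}_X)},
$$
where $\mathcal{B}_X$ runs over effective $\mathbb{Q}$-divisors on $X$ with $\mathcal{B}_X\sim_{\mathbb{Q}}-(K_X+E_X)$ and $E_X\not\subseteq\mathrm{Supp}(\mathcal{B}_X)$, and $v$ over divisorial valuations whose center on $X$ is a proper subset of $E_X$; after normalizing so that $\mathrm{ord}_{E_X}$ of the corresponding divisor $D_U=\phi_*\mathcal{B}_X$ on $U$ equals $\delta_X$, this is exactly a formula for $\mathrm{lct}_P(U;D_U)$ in terms of $\alpha(E_X,\mathrm{Diff}_{E_X}(0))$ and of how $D_U$ meets $E_X$. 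I would then feed into this formula, on the $X$-side, the divisor transported through $\psi$ and $\rho^{-1}$ from an \emph{optimal} anticanonical $\mathbb{Q}$-divisor of $(E_Y,\mathrm{Diff}_{E_Y}(0))$ (one realizing $\alpha(E_Y,\mathrm{Diff}_{E_Y}(0))$), using the test valuation $v=\mathrm{ord}_{E_Y}$; running the symmetric construction on the $Y$-side and combining the two resulting inequalities with the numerical relations $\delta_Xm<\delta_Y$, $\delta_Yk<\delta_X$, $mk<1$ above should pin the sum $\alpha(E_X,\mathrm{Diff}_{E_X}(0))+\alpha(E_Y,\mathrm{Diff}_{E_Y}(0))$ strictly below $1$.

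The step I expect to be the main obstacle is precisely this final combination. The bound is sharp — for chains of $\mathbb{A}_m$-singularities the two $\alpha$-invariants of distinct Koll\'ar components sum to a number tending to $1$ as in Examples~\ref{example:Du-Val-different} and~\ref{example:A1} — so the estimate must be kept tight throughout, and one must control accurately how the optimal anticanonical divisor of one component behaves after being pushed to $U$ and restricted to the other component. Since restriction to a subvariety can only \emph{increase} multiplicities, the naive transfer of the anticanonical divisor points in the wrong direction, and it is here that the lifting lemma (surjectivity of restriction, via relative Kawamata--Viehweg vanishing) and the $\phi$-ampleness of $-(K_X+E_X)$ are genuinely needed. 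Extracting the \emph{strict} inequality, rather than merely ``$\leqslant 1$'', will finally require checking that at least one of the two test configurations is non-extremal as soon as $\mathrm{ord}_{E_X}\neq\mathrm{ord}_{E_Y}$.
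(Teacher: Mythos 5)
Your reduction steps are fine: identifying $\delta_X$ with the log discrepancy $A_U(\mathrm{ord}_{E_X})$, the observation that equal valuations force $X\cong Y$ over $U$ via $\mathrm{Proj}$ of the valuation algebra, and the relations $A_{(X,E_X)}(\mathrm{ord}_{E_Y})=\delta_Y-\delta_X m>0$, $A_{(Y,E_Y)}(\mathrm{ord}_{E_X})=\delta_X-\delta_Y k>0$, hence $mk<1$, are all correct. But the proof has a genuine gap exactly where you say you expect one: the ``final combination'' that is supposed to force $\alpha(E_X,\mathrm{Diff}_{E_X}(0))+\alpha(E_Y,\mathrm{Diff}_{E_Y}(0))<1$ is never carried out, and nothing in the data you have assembled visibly implies it. The inequalities $\delta_X m<\delta_Y$, $\delta_Y k<\delta_X$ constrain the two valuations against each other, but they say nothing yet about how singular the divisor transported from the $Y$-side becomes along $E_X$ (and symmetrically), which is the only way the $\alpha$-invariants can enter. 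You acknowledge that the naive restriction argument ``points in the wrong direction,'' and the proposed lct-type formula for $\alpha(E_X,\mathrm{Diff}_{E_X}(0))$ (itself only sketched, with the additional problem that an ``optimal'' divisor realizing the infimum need not exist, which matters since you need a strict inequality) does not by itself produce the required non-log-canonical pair on either side. Since the bound is sharp, as you note, this missing step is not a technicality: it is the entire content of the theorem.

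For comparison, the paper's mechanism for this decisive step is quite different and does not go through a strict inequality for distinct components at all. One takes a common resolution $W$, the base-point-free systems $\mathcal{M}_X=|-nE_X|$ and $\mathcal{M}_Y=|-nE_Y|$ (for $n\gg 0$, $U$ affine), and scales them so that $\epsilon_Y\mathcal{M}_Y^X\sim_{\mathbb{Q}}-(K_X+E_X)$ and $\epsilon_X\mathcal{M}_X^Y\sim_{\mathbb{Q}}-(K_Y+E_Y)$. A Negativity Lemma computation shows the ``crossed'' pairs $(X,E_X+\epsilon_Y\mathcal{M}_Y^X)$ and $(Y,E_Y+\epsilon_X\mathcal{M}_X^Y)$ are not log canonical, and the key lemma is: for any $\lambda,\mu>0$ with $\lambda+\mu\geqslant 1$, if both $(X,E_X+\lambda\epsilon_Y\mathcal{M}_Y^X)$ and $(Y,E_Y+\mu\epsilon_X\mathcal{M}_X^Y)$ were log canonical, then both $X$ and $Y$ would be the (unique) canonical model of one and the same log canonical pair $(W,D_W)$ on the resolution, forcing $\rho$ to be an isomorphism. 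Taking $\lambda,\mu$ to be the two $\alpha$-invariants and applying Inversion of Adjunction to whichever pair fails to be log canonical then contradicts the definition of the $\alpha$-invariant, since $\epsilon_Y\mathcal{M}_Y^X\vert_{E_X}\sim_{\mathbb{Q}}-(K_{E_X}+\mathrm{Diff}_{E_X}(0))$. If you want to salvage your route, you need a substitute for this uniqueness-of-canonical-models argument (or an equivalent device) that converts the hypothesis $\lambda+\mu\geqslant 1$ into a failure of log canonicity along one of the two components; the numerical relations and the lct formula alone will not do it.
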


Before proving this result, let us consider its applications.
Suppose that
\begin{equation}\label{eq:log-Fano-semi-stable}
\alpha\big(E_X,\mathrm{Diff}_{E_X}(0)\big)\geqslant\frac{\mathrm{dim}(U)-1}{\mathrm{dim}(U)}.
\end{equation}
By Theorem~\ref{theorem:main}, this inequality implies that the $\alpha$-invariant of another Koll\'ar component of the singularity $U\ni P$, if any,
must be less than $\frac{1}{\mathrm{dim}(U)}$, so that it should be K-unstable.
Of course, this also follows from Theorem~\ref{theorem:Li-Xu}, because the inequality~\eqref{eq:log-Fano-semi-stable}
implies that the log Fano variety $(E_X,\mathrm{Diff}_{E_X}(0))$ is K-semistable.

Theorem~\ref{theorem:main} also implies

\begin{corollary}
\label{corollary:Kollar-unique}
If $\alpha(E_X,\mathrm{Diff}_{E_X}(0))\geqslant 1$, then the Koll\'ar component of $U\ni P$ is unique.
\end{corollary}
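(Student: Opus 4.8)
The plan is to read this off directly from Theorem~\ref{theorem:main}. Fix a purely log terminal blow-up $\phi\colon(X\supset E_X)\to(U\ni P)$ whose Koll\'ar component satisfies $\alpha(E_X,\mathrm{Diff}_{E_X}(0))\geqslant 1$, and let $(E_Y,\mathrm{Diff}_{E_Y}(0))$ be any Koll\'ar component of $U\ni P$, realized by a purely log terminal blow-up $\psi\colon(Y\supset E_Y)\to(U\ni P)$. Since $\phi$ and $\psi$ are birational morphisms onto $U$, the composition $\rho=\psi^{-1}\circ\phi\colon X\dashrightarrow Y$ is a birational map with $\psi\circ\rho=\phi$ as rational maps, so we obtain a commutative diagram of exactly the form appearing in Theorem~\ref{theorem:main} at no cost; unlike in Theorem~\ref{theorem:Park}, no further compatibility of $\rho$ with the fibres has to be arranged by hand.

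It then remains only to check the numerical hypothesis of Theorem~\ref{theorem:main}. For this I would merely use that $\alpha(E_Y,\mathrm{Diff}_{E_Y}(0))\geqslant 0$: the log pair $(E_Y,\mathrm{Diff}_{E_Y}(0))$ has Kawamata log terminal singularities (this is recorded right before Definition~\ref{definition:Kollar-component}), so $\lambda=0$ already lies in the set over which the supremum defining $\alpha(E_Y,\mathrm{Diff}_{E_Y}(0))$ is taken. (In fact the $\alpha$-invariant of a klt log Fano variety is strictly positive, but this stronger fact is not needed here.) Hence
$$
\alpha\big(E_X,\mathrm{Diff}_{E_X}(0)\big)+\alpha\big(E_Y,\mathrm{Diff}_{E_Y}(0)\big)\geqslant 1,
$$
so Theorem~\ref{theorem:main} applies and $\rho$ is an isomorphism.

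Finally I would translate this back into the desired uniqueness statement. An isomorphism $\rho\colon X\to Y$ over $U$ necessarily carries the exceptional locus of $\phi$ isomorphically onto the exceptional locus of $\psi$; since by definition of a purely log terminal blow-up these loci are the single prime divisors $E_X$ and $E_Y$, it follows that $\rho$ identifies $E_X$ with $E_Y$. In particular $E_X$ and $E_Y$ determine the same divisorial valuation of the function field of $U$, and so the two Koll\'ar components coincide. Thus $U\ni P$ admits a unique Koll\'ar component, as claimed. There is no genuine obstacle in this argument: it is a purely formal consequence of Theorem~\ref{theorem:main}, the only point worth isolating being the (entirely elementary) observation that any two purely log terminal blow-ups of the same germ automatically fit into a diagram to which Theorem~\ref{theorem:main} may be applied.
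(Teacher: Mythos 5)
Your proposal is correct and is exactly the deduction the paper intends: the authors state Corollary~\ref{corollary:Kollar-unique} as an immediate consequence of Theorem~\ref{theorem:main}, applied to the automatic diagram $\rho=\psi^{-1}\circ\phi$ for any second purely log terminal blow-up, with the numerical hypothesis supplied by $\alpha(E_X,\mathrm{Diff}_{E_X}(0))\geqslant 1$ together with the (non-)negativity of the other $\alpha$-invariant coming from the Kawamata log terminality of $(E_Y,\mathrm{Diff}_{E_Y}(0))$. Your closing observation that an isomorphism over $U$ identifies the exceptional divisors, hence the Koll\'ar components, completes the argument in the same way.
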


This corollary is well known: it follows from \cite[Theorem~4.3]{Prokhorov} and \cite[Theorem~2.1]{Kudryavtsev}.
Recall from~\cite[Definition~4.1]{Prokhorov}
that the singularity $U\ni P$ is said to be \emph{weakly exceptional}
if it has a unique purely log terminal blow-up. This is equivalent to the condition that there is a Koll\'ar component $E_X$ of $U\ni P$
such that~\mbox{$\alpha(E_X,\mathrm{Diff}_{E_X}(0))\geqslant 1$} (see~\cite[Theorem~4.3]{Prokhorov}, \cite[Theorem~2.1]{Kudryavtsev}, and \cite{ChSh09}).
It follows from Example~\ref{example:Du-Val-different} that Du Val singularities of types~$\mathbb{D}$
and~$\mathbb{E}$ are weakly exceptional.
On the other hand, Du Val singularities of type  $\mathbb{A}$ are not weakly exceptional,
since each of them admits several Koll\'ar components (see Examples~\ref{example:Du-Val-different} and~\ref{example:A1}), and thus has several purely log terminal blow ups.

\begin{remark}
\label{remark:quotient}
Du Val singularities are special examples of two-dimensional quotient singularities.
Note that quotient singularities are always Kawamata log terminal.
For each of them, it is easy to describe one Koll\'ar component.
Let $\widehat{G}$ be a~finite subgroup in $\mathrm{GL}_{n+1}(\mathbb{C})$.
Suppose that $U\ni P$ is a~quotient singularity $\mathbb{C}^{n+1}\slash\widehat{G}$.
By the Chevalley--Shephard--Todd theorem, we may assume that the group $\widehat{G}$ does not contain any quasi-reflections (cf. \cite[Remark~1.16]{ChSh10}).
Let $\eta\colon\mathbb{C}^{n+1}\to U$ be the~quotient map.
Then there is a~commutative diagram
$$
\xymatrix{
W\ar@{->}[d]_{\pi}\ar@{->}[rr]^{\omega}&&Y\ar@{->}[d]^{\psi}\\
\mathbb{C}^{n+1}\ar@{->}[rr]_{\eta}&&U}
$$
where $\pi$ is the~blow up at the origin,
the~morphism $\omega$ is the~quotient map that is induced by the action~of $\widehat{G}$ lifted to the~variety $W$,
and $\psi$ is a~birational morphism.
Denote by $\widetilde{E}$ the exceptional divisor of $\pi$, and denote by $E_Y$ the exceptional divisor of $\psi$.
Then  $\widetilde{E}\cong\mathbb{P}^{n}$, and $E_Y$ is naturally isomorphic to the quotient $\mathbb{P}^{n}/G$,
where $G$ is the image of the group $\widehat{G}$ in $\mathrm{PGL}_{n+1}(\mathbb{C})$.
Moreover, the log pair $(Y,E_Y)$ has purely log terminal singularities, and the divisor $-(K_{Y}+E_Y)$ is $\psi$-ample.
Thus, the log Fano variety $(E_Y,\mathrm{Diff}_{E_Y}(0))$ is a Koll\'ar component of the singularity $U\ni P$.
Also, it follows from \cite[Example~7.1(1)]{LiXu} and \cite[Theorem~1.2]{LiXu} that {$(E_Y,\mathrm{Diff}_{E_Y}(0))$} is K-semistable.
Furthermore, one has
$$
\alpha\big(E_Y,\mathrm{Diff}_{E_Y}(0)\big)=\alpha_G\big(\mathbb{P}^n\big)
$$
(see~\cite[Proof of Theorem~3.16]{ChSh09}).
Thus, if $\alpha_G(\mathbb{P}^n)\geqslant 1$, then this Koll\'ar component is unique by Corollary~\ref{corollary:Kollar-unique}. One can find
many subgroups $G\subset\mathrm{PGL}_{n+1}(\mathbb{C})$ with $\alpha_G(\mathbb{P}^n)\geqslant 1$  in \cite{MarPr99,ChSh09,ChSh10,Sakovich2012,ChSh2013,Sakovich2014,ChSh2014crelle}.
Note also that one always has $\alpha_G(\mathbb{P}^n)\leqslant 1184036$ by \cite{Tiep}.
\end{remark}

In the remaining part of the paper, we prove Theorem~\ref{theorem:main}.
Let us use its assumptions and notations.
We have to show that $\rho$ is an isomorphism.
Suppose that this is not the case. Let us seek for a contradiction.

We may assume that $U$ is affine.
There exists a commutative diagram
$$
\xymatrix{
&&W\ar@{->}[rrd]^g\ar@{->}[lld]_f&&\\
X\ar@{->}[rrd]_{\phi}\ar@{-->}[rrrr]^{\rho}&&&& Y\ar@{->}[lld]^{\psi}\\
&& U &&}
$$
such that $W$ is a smooth variety, and $f$ and $g$ are birational morphisms.
Denote by  $E_X^W$ and $E_Y^W$ the proper transforms of the divisors $E_X$ and $E_Y$ on the variety $W$, respectively.
Then $E_X^W$ is $g$-exceptional, and $E_Y^W$ is $f$-exceptional.
We may assume that $E_X^W$, $E_Y^W$ and the remaining exceptional divisors of $f$ and $g$ form a divisor with simple normal crossings.

Observe that $E_X^W\ne E_Y^W$. Indeed, if $E_X^W=E_Y^W$, then $\rho$ is small,
which is impossible, because $-E_X$ is $\phi$-ample, and $-E_Y$ is $\psi$-ample (see \cite[Proposition~2.7]{Co95}).
Let $F_1,\ldots,F_m$ be the prime divisors on $W$ that are contracted by both $f$ and $g$.
Then
$$
K_W+E_X^W+aE_Y^W+\sum_{i=1}^{m}a_iF_i\sim_{\mathbb{Q}} f^*\Big(K_X+E_X\Big)
$$
for some rational numbers $a,a_1,\ldots,a_m$.
Since the log pair $(X,E_X)$ has purely log terminal singularities, all numbers $a,a_1,\ldots,a_m$ are strictly less than $1$.
Also, we have
$$
E_X^W\sim_{\mathbb{Q}} f^*\big(E_X\big)-bE_Y^W-\sum_{i=1}^{m}b_iF_i,
$$
where $b,b_1,\ldots,b_m$ are non-negative rational numbers.
Then $b>0$, since $f(E_Y^W)\subset E_X$.

Fix an integer $n\gg 0$.
Put $\mathcal{M}_X=|-nE_X|$.
Then $\mathcal{M}_X$ does not have any base points.
Denote its proper transforms on $Y$ and $W$ by $\mathcal{M}_X^Y$ and $\mathcal{M}_X^W$, respectively.
Then
$$
\mathcal{M}_X^W\sim_{\mathbb{Q}} -f^*\big(nE_X\big)\sim_{\mathbb{Q}}-nE_X^W-nbE_Y^W-\sum_{i=1}^{m}nb_iF_i,
$$
which implies that $\mathcal{M}_X^Y\sim_{\mathbb{Q}}-nbE_Y$.
On the other hand, we have
$-(K_Y+E_Y)\sim_{\mathbb{Q}} -\delta_Y E_Y$ for some positive  rational number $\delta_Y$. Put $\epsilon_X=\frac{\delta_Y}{nb}$.
Then $\epsilon_X\mathcal{M}_X^Y\sim_{\mathbb{Q}}-(K_Y+E_Y)$, so that
$$
K_W+E_Y^W+\epsilon_X\mathcal{M}_X^W+\alpha E_X^W+\sum_{i=1}^{m}\alpha_iF_i\sim_{\mathbb{Q}} g^*\Big(K_Y+E_Y+\epsilon_X\mathcal{M}_X^Y\Big)\sim_{\mathbb{Q}} 0
$$
for some rational numbers $\alpha,\alpha_1,\ldots,\alpha_m$.
Similarly, let $\mathcal{M}_Y$ be the base point free linear system $|-nE_Y|$.
Denote by $\mathcal{M}_Y^X$ and $\mathcal{M}_Y^W$ its proper transforms on $X$ and $W$, respectively.
Then there is a positive rational number $\epsilon_Y$ such that $\epsilon_Y\mathcal{M}_Y^X\sim_{\mathbb{Q}}-(K_X+E_X)$,
so that
$$
K_W+E_X^W+\epsilon_Y\mathcal{M}_Y^W+\beta E_Y^W+\sum_{i=1}^{m}\beta_iF_i\sim_{\mathbb{Q}} f^*\Big(K_X+E_X+\epsilon_Y\mathcal{M}_Y^X\Big)\sim_{\mathbb{Q}} 0
$$
for some rational numbers $\beta,\beta_1,\ldots,\beta_m$.

\begin{lemma}\label{lemma:alpha-beta-new}
One has $\alpha>1$ and $\beta>1$. In particular, the singularities of the log pairs $(Y,E_Y+\epsilon_X\mathcal{M}_X^Y)$ and $(X,E_X+\epsilon_Y\mathcal{M}_Y^X)$ are not log canonical.
\end{lemma}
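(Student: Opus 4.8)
The plan is to pin down the number $\alpha$ in closed form from the relations already established, and then to conclude $\alpha>1$ from the fact that $(X,E_X)$ is purely log terminal; the inequality $\beta>1$ will follow by the symmetry interchanging $X$ and $Y$, and the statement about log canonicity is then immediate. Concretely, I would first rewrite the purely log terminal relation $K_W+E_X^W+aE_Y^W+\sum_{i=1}^{m}a_iF_i\sim_{\mathbb{Q}}f^*(K_X+E_X)$ using $K_X+E_X\sim_{\mathbb{Q}}\delta_X E_X$ and $\mathcal{M}_X^W\sim_{\mathbb{Q}}-nf^*(E_X)$ (so that $f^*(K_X+E_X)\sim_{\mathbb{Q}}\delta_X f^*(E_X)\sim_{\mathbb{Q}}-\tfrac{\delta_X}{n}\mathcal{M}_X^W$), obtaining
$$
K_W+E_X^W+aE_Y^W+\sum_{i=1}^{m}a_iF_i+\frac{\delta_X}{n}\mathcal{M}_X^W\ \sim_{\mathbb{Q}}\ 0 .
$$
Subtracting this from $K_W+E_Y^W+\epsilon_X\mathcal{M}_X^W+\alpha E_X^W+\sum_{i=1}^{m}\alpha_iF_i\sim_{\mathbb{Q}}0$, then replacing $\mathcal{M}_X^W$ by $-n\big(E_X^W+bE_Y^W+\sum_{i=1}^{m}b_iF_i\big)$ and collecting terms, I would arrive, with $d:=n\epsilon_X-\delta_X$, at
$$
(\alpha-1-d)E_X^W+(1-a-db)E_Y^W+\sum_{i=1}^{m}(\alpha_i-a_i-db_i)F_i\ \sim_{\mathbb{Q}}\ 0 .
$$

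The key step comes next. The divisors $E_X^W$, $E_Y^W$, $F_1,\dots,F_m$ are all exceptional over $U$: the morphism $\phi\circ f=\psi\circ g$ contracts $E_X^W$ and $E_Y^W$ to the point $P$ and each $F_i$ to a subvariety of dimension at most $\dim U-2$, so (we may assume $\dim U\geqslant 2$) each of them has image of dimension strictly less than $\dim U-1$. The $\mathbb{Q}$-divisor in the last display is $\mathbb{Q}$-linearly trivial, hence trivial relatively over $U$; applying the negativity lemma to it, and also to its negative, shows that it is the zero divisor, so all of its coefficients vanish. Vanishing of the coefficient of $E_Y^W$ gives $db=1-a$, and then vanishing of the coefficient of $E_X^W$ gives
$$
\alpha=1+d=1+\frac{1-a}{b}.
$$
Since $(X,E_X)$ is purely log terminal we have $a<1$, and $b>0$ because $f(E_Y^W)\subseteq E_X$; hence $\alpha>1$.

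Interchanging the roles of $X$ and $Y$ and repeating the same computation yields $\beta=1+\tfrac{1-a'}{b'}$, where $a'<1$ is the coefficient of the $g$-exceptional divisor $E_X^W$ in the relation $K_W+E_Y^W+a'E_X^W+\sum_i a'_iF_i\sim_{\mathbb{Q}}g^*(K_Y+E_Y)$ — it is $<1$ because $(Y,E_Y)$ is purely log terminal — and $b'>0$ is the coefficient of $E_X^W$ in $g^*(E_Y)$; one checks $b'>0$ exactly as $b>0$, since $\psi\big(g(E_X^W)\big)=\phi\big(f(E_X^W)\big)=\phi(E_X)=P$ forces $g(E_X^W)\subseteq E_Y$. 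Thus $\beta>1$ as well. Finally, the relation $K_W+E_Y^W+\epsilon_X\mathcal{M}_X^W+\alpha E_X^W+\sum_i\alpha_iF_i\sim_{\mathbb{Q}}g^*\big(K_Y+E_Y+\epsilon_X\mathcal{M}_X^Y\big)$ is precisely the formula computing the discrepancies of the pair $(Y,E_Y+\epsilon_X\mathcal{M}_X^Y)$ over the log resolution $g$, so the coefficient $\alpha$ of the $g$-exceptional divisor $E_X^W$ equals $1-a\big(E_X^W;\,Y,E_Y+\epsilon_X\mathcal{M}_X^Y\big)$; as $\alpha>1$ this log discrepancy is negative, so $(Y,E_Y+\epsilon_X\mathcal{M}_X^Y)$ is not log canonical, and symmetrically $(X,E_X+\epsilon_Y\mathcal{M}_Y^X)$ is not log canonical because $\beta>1$.

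The step I expect to need the most care is not deep, but it is the heart of the matter: keeping the signs straight — in particular that $f^*(K_X+E_X)\sim_{\mathbb{Q}}\delta_X f^*(E_X)$, with the sign fixed by $-(K_X+E_X)\sim_{\mathbb{Q}}-\delta_X E_X$ — and making sure that the $\mathbb{Q}$-linear equivalence handed to the negativity lemma is supported only on divisors exceptional over $U$, which is what forces the coefficients to vanish and delivers the clean identity $\alpha=1+\tfrac{1-a}{b}$. The base point free linear systems $\mathcal{M}_X$ and $\mathcal{M}_Y$ cause no trouble, since $\mathcal{M}_X^W$ and $\mathcal{M}_Y^W$ are expressed in terms of $E_X^W$, $E_Y^W$, and the $F_i$ before the negativity lemma is invoked.
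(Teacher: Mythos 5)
Your proof is correct, and it runs on exactly the ingredients the paper itself uses --- the two crepant pull-back relations on $W$, the expressions of $\mathcal{M}_X^W$ and $\mathcal{M}_Y^W$ through $f^*(E_X)$ and $g^*(E_Y)$, the Negativity Lemma, and the plt inequalities $a<1$, $a'<1$ together with $b>0$, $b'>0$ --- but it deploys the Negativity Lemma differently. The paper stays relative to $f\colon W\to X$: it assembles the divisor $B=(1-\alpha)E_X^W+(a-1)E_Y^W+\sum_{i}(a_i-\alpha_i)F_i$, observes that $-B$ is $f$-nef because $B$ is $\mathbb{Q}$-linearly equivalent to a multiple of $f^*(K_X+E_X)$, notes that $B$ is not effective since the coefficient $a-1$ of $E_Y^W$ is negative, and concludes from the Negativity Lemma that $f_*B=(1-\alpha)E_X$ is not effective, i.e.\ $\alpha>1$. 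You instead push everything down to $U$: your combined divisor is supported on divisors exceptional over $U$ and is $\mathbb{Q}$-linearly trivial, so applying the Negativity Lemma to it and to its negative forces it to vanish identically, which yields the exact values $\alpha=1+\frac{1-a}{b}$ and $\beta=1+\frac{1-a'}{b'}$ (and, implicitly, the relation $\delta_Y-b\,\delta_X=1-a$) rather than only the inequalities; this is a slightly stronger and arguably cleaner conclusion, at the cost of invoking that all components in play are exceptional over $U$ (automatic here, since $\dim U\geqslant 2$ and $\phi\circ f=\psi\circ g$ contracts $E_X^W$, $E_Y^W$ to $P$ and each $F_i$ to codimension at least two). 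Your explicit treatment of $\beta$ via the symmetric coefficients $a'$, $b'$ (with $b'>0$ because $g(E_X^W)\subseteq\psi^{-1}(P)=E_Y$), and your reading of the ``in particular'' clause through the discrepancy $-\alpha<-1$ of $E_X^W$ with respect to $(Y,E_Y+\epsilon_X\mathcal{M}_X^Y)$, correctly fill in steps the paper leaves to symmetry and to the definitions.
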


\begin{proof}
It is enough to show that $\alpha>1$. We have
$$
E_Y^W+\epsilon_X\mathcal{M}_X^W+\alpha E_X^W+\sum_{i=1}^{m}\alpha_iF_i
\sim_{\mathbb{Q}} {-K_W }\sim_{\mathbb{Q}}
E_X^W+aE_Y^W+\sum_{i=1}^{m}a_iF_i-f^*\Big(K_X+E_X\Big).
$$
This gives
\begin{equation}\label{eq:epsilon-MX}
\epsilon_X\mathcal{M}_X^W
\sim_{\mathbb{Q}} (1-\alpha)E_X^W+(a-1)E_Y^W+\sum_{i=1}^{m}(a_i-\alpha_i)F_i-f^*\Big(K_X+E_X\Big).
\end{equation}
It implies that
$$
\epsilon_X\mathcal{M}_X\sim_{\mathbb{Q}}-(K_X+E_X)-(\alpha-1)E_X.
$$
Recall that $-(K_X+E_X)\sim_{\mathbb{Q}} -\delta_X E_X$. We then obtain
$$
\epsilon_X\mathcal{M}_X\sim_{\mathbb{Q}}-\Big(K_X+E_X\Big)-(\alpha-1)E_X\sim_{\mathbb{Q}}-t_X\Big(K_X+E_X\Big),
$$
where {$t_X=1+\frac{\alpha-1}{\delta_X}$}.
On the other hand, from~\eqref{eq:epsilon-MX} we obtain
$$
(1-\alpha)E_X^W+\sum_{i=1}^{m}(a_i-\alpha_i)F_i\sim_{\mathbb{Q}} (1-a)E_Y^W+ (1-t_X)f^*\Big(K_X+E_X\Big).
$$
Now we let
$$
B=(1-\alpha)E_X^W+\sum_{i=1}^{m}(a_i-\alpha_i)F_i+(a-1)E_Y^W,
$$
so that $-B$ is $f$-nef. Then $B$ is effective if and only if $f_{*}(B)=(1-\alpha)E_X$ is effective by Negativity Lemma (see \cite[Lemma~3.39]{KoMo98}).
Since $a<1$, the divisor $B$ is not effective, which implies that $\alpha>1$.
\end{proof}

As in the proof of Lemma~\ref{lemma:alpha-beta-new}, put {$t_Y=1+\frac{\beta-1}{\delta_Y}$}.
Then
$$
\epsilon_Y\mathcal{M}_Y\sim_{\mathbb{Q}}-t_Y\big(K_Y+E_Y\big).
$$
Now take any {positive} rational numbers $\lambda$ and $\mu$ such that $\lambda+\mu\geqslant 1$.
One has
$$
K_X+E_X+\lambda\epsilon_Y\mathcal{M}_Y^X+\mu\epsilon_X\mathcal{M}_X\sim_{\mathbb{Q}}-\big(\lambda+\mu t_X-1\big)\Big(K_X+E_X\Big),
$$
so that $K_X+E_X+\lambda\epsilon_Y\mathcal{M}_Y^X+\mu\epsilon_X\mathcal{M}_X$ is $\phi$-ample.
Similarly, we see that
$$
K_Y+E_Y+\lambda\epsilon_Y\mathcal{M}_Y+\mu\epsilon_X\mathcal{M}_X^Y\sim_{\mathbb{Q}}-\big(\lambda t_Y+\mu-1\big)\Big(K_Y+E_Y\Big),
$$
so that $K_Y+E_Y+\lambda\epsilon_Y\mathcal{M}_Y+\mu\epsilon_X\mathcal{M}_X^Y$ is $\psi$-ample.

\begin{lemma}
At least one of the log pairs $(X, E_X+\lambda\epsilon_Y\mathcal{M}_Y^X)$ and $(Y,E_Y+\mu\epsilon_X\mathcal{M}_X^Y)$ is not log canonical.
\end{lemma}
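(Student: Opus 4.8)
The plan is to argue by contradiction. Assume that both $(X,E_X+\lambda\epsilon_Y\mathcal{M}_Y^X)$ and $(Y,E_Y+\mu\epsilon_X\mathcal{M}_X^Y)$ are log canonical; I will show that then $\lambda+\mu<1$, which contradicts the assumption $\lambda+\mu\geqslant 1$. Everything takes place on $W$ and amounts to keeping track of the discrepancies of the two divisors $E_X^W$ and $E_Y^W$.

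\textbf{Step 1: two necessary inequalities.} Write $f^*(\mathcal{M}_Y^X)=\mathcal{M}_Y^W+mE_Y^W+\sum_i m_iF_i$ with $m=\mathrm{ord}_{E_Y^W}(\mathcal{M}_Y^X)\geqslant 0$. Then the coefficient of $E_Y^W$ in $f^*(K_X+E_X+\lambda\epsilon_Y\mathcal{M}_Y^X)-K_W$ equals $a+\lambda\epsilon_Y m$, and comparing the case $\lambda=1$ with the identity defining $\beta$ gives $\epsilon_Y m=\beta-a$. Since $(X,E_X+\lambda\epsilon_Y\mathcal{M}_Y^X)$ is log canonical, this coefficient is at most $1$, i.e. $a+\lambda(\beta-a)\leqslant 1$; as $\beta>1$ by Lemma~\ref{lemma:alpha-beta-new} and $a<1$ because $(X,E_X)$ is purely log terminal, this reads
$$\lambda\leqslant\frac{1-a}{\beta-a}.$$
Symmetrically, with $\bar a$ the coefficient of $E_X^W$ in $g^*(K_Y+E_Y)-K_W$, log canonicity of $(Y,E_Y+\mu\epsilon_X\mathcal{M}_X^Y)$ forces $\mu\leqslant\frac{1-\bar a}{\alpha-\bar a}$.

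\textbf{Step 2: evaluating $a$ and $\bar a$.} Let $\bar b>0$ and $\bar b_i\geqslant 0$ be defined by $E_Y^W\sim_{\mathbb{Q}}g^*(E_Y)-\bar bE_X^W-\sum_i\bar b_iF_i$, the analogue of $b$ (here $\bar b>0$ since $g(E_X^W)\subset E_Y$), and recall the symmetric counterparts $\mathcal{M}_Y^X\sim_{\mathbb{Q}}-n\bar b E_X$ and $\epsilon_Y=\tfrac{\delta_X}{n\bar b}$ of the relations established in the text. From $K_X+E_X\sim_{\mathbb{Q}}\delta_X E_X$ and $f^*E_X\sim_{\mathbb{Q}}E_X^W+bE_Y^W+\sum_i b_iF_i$ one gets
$$K_W\sim_{\mathbb{Q}}(\delta_X-1)E_X^W+(\delta_X b-a)E_Y^W+\sum_i(\delta_X b_i-a_i)F_i,$$
and likewise, from the $Y$-side,
$$K_W\sim_{\mathbb{Q}}(\delta_Y\bar b-\bar a)E_X^W+(\delta_Y-1)E_Y^W+\sum_i(\delta_Y\bar b_i-\bar a_i)F_i.$$
The difference of the two right-hand sides is supported on the divisors $E_X^W,E_Y^W,F_i$, which are all exceptional over $U$, and it is $\mathbb{Q}$-linearly trivial, hence zero by the Negativity Lemma. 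Comparing the coefficients of $E_X^W$ and $E_Y^W$ gives $\delta_X-1=\delta_Y\bar b-\bar a$ and $\delta_X b-a=\delta_Y-1$. Exactly as $\epsilon_X=\tfrac{\delta_Y}{nb}$ was obtained, one has $\epsilon_X n=\delta_X+\alpha-1$ and $\epsilon_Y n=\delta_Y+\beta-1$, hence $\delta_Y=b(\delta_X+\alpha-1)$ and $\delta_X=\bar b(\delta_Y+\beta-1)$. Substituting, one finds the clean formulas $a=1-b(\alpha-1)$ and $\bar a=1-\bar b(\beta-1)$, so that $\beta-a=(\beta-1)+b(\alpha-1)$ and $\alpha-\bar a=(\alpha-1)+\bar b(\beta-1)$.

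\textbf{Step 3: conclusion.} Put $p=\beta-1>0$, $q=\alpha-1>0$, $x=bq>0$ and $y=\bar bp>0$. By Steps 1--2,
$$\lambda+\mu\leqslant\frac{x}{p+x}+\frac{y}{q+y}.$$
A direct computation gives $(p+x)(q+y)-x(q+y)-y(p+x)=pq-xy$, so the right-hand side is $<1$ if and only if $xy<pq$, i.e. if and only if $b\bar b<1$. But $\delta_Y=b(\delta_X+\alpha-1)>b\delta_X$ and $\delta_X=\bar b(\delta_Y+\beta-1)>\bar b\delta_Y$ force $b<\delta_Y/\delta_X$ and $\bar b<\delta_X/\delta_Y$, whence $b\bar b<1$. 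Therefore $\lambda+\mu<1$, a contradiction.

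The bulk of the argument is the coefficient chase; the one place where something genuinely has to be verified is the vanishing in Step 2 of the $\mathbb{Q}$-linearly trivial exceptional divisor (equivalently, that the two pullback expressions for $K_W$ coincide), and through it the formulas for $a$ and $\bar a$. Everything else is formal manipulation of the identities already in place, and the decisive numerical inequality $b\bar b<1$ is precisely where the fact that $\rho$ is genuinely birational enters, via $\alpha>1$ and $\beta>1$.
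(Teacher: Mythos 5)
Your argument is correct, but it is a genuinely different proof from the one in the paper. The paper argues softly: assuming both pairs are log canonical, it adds the base-point-free systems to form $(X, E_X+\lambda\epsilon_Y\mathcal{M}_Y^X+\mu\epsilon_X\mathcal{M}_X)$ and $(Y,E_Y+\lambda\epsilon_Y\mathcal{M}_Y+\mu\epsilon_X\mathcal{M}_X^Y)$, notes that both adjoint divisors are ample over $U$ (this is the only place $\lambda+\mu\geqslant 1$ enters), and exhibits both pairs as canonical models of the log smooth pair $(W,D_W)$; uniqueness of the canonical model (\cite[Theorems~3.52 and~3.53]{KoMo98}) then forces $\rho$ to be an isomorphism, a contradiction. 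You instead extract from log canonicity only the coefficient conditions along the two distinguished divisors $E_Y^W$ and $E_X^W$, compute $a=1-b(\alpha-1)$ and $\bar a=1-\bar b(\beta-1)$ via the two crepant-pullback expressions for $K_W$ (your Negativity step is fine: the $F_i$ need not map to $P$, but they are contracted by $W\to U$, which is all the lemma requires; the same remark justifies the implicit identification $\beta=a+\epsilon_Y m$, this time by Negativity over $X$), and conclude $\lambda+\mu\leqslant\frac{b(\alpha-1)}{(\beta-1)+b(\alpha-1)}+\frac{\bar b(\beta-1)}{(\alpha-1)+\bar b(\beta-1)}<1$ using $b\bar b<1$, which indeed follows from $\alpha>1$, $\beta>1$ of Lemma~\ref{lemma:alpha-beta-new}. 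Your route is more computational but also more elementary (no uniqueness of canonical models, no need for log canonicity of $(W,D_W)$), and it gives slightly more: an explicit upper bound on $\lambda+\mu$ when both pairs are log canonical, and the information that failure of log canonicity is already detected by the specific divisors $E_Y^W$ and $E_X^W$. The paper's route is shorter and avoids all coefficient chasing, at the cost of invoking the MMP uniqueness statement.
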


\begin{proof}
Suppose that both
$(X, E_X+\lambda\epsilon_Y\mathcal{M}_Y^X)$ and~\mbox{$(Y,E_Y+\mu\epsilon_X\mathcal{M}_X^Y)$} are log canonical.
Then the log pairs~\mbox{$(X, E_X+\lambda\epsilon_Y\mathcal{M}_Y^X+\mu\epsilon_X\mathcal{M}_X)$}
and~\mbox{$(Y,E_Y+\lambda\epsilon_Y\mathcal{M}_Y+\mu\epsilon_X\mathcal{M}_X^Y)$}
are also log canonical.
On the other hand, we have
$$
K_W+E_X^W+\lambda\epsilon_Y\mathcal{M}_Y^W+\mu\epsilon_X\mathcal{M}_X^W+cE_Y^W+\sum_{i=1}^{m}c_iF_i\sim_{\mathbb{Q}} f^*\Big(K_X+E_X+\lambda\epsilon_Y\mathcal{M}_Y^X+\mu\epsilon_X\mathcal{M}_X\Big)
$$
for some rational numbers $c,c_1,\ldots,c_m$ that do not exceed $1$. Similarly, we have
$$
K_W+E_Y^W+\lambda\epsilon_Y\mathcal{M}_Y^W+\mu\epsilon_X\mathcal{M}_X^W+dE_X^W+\sum_{i=1}^{m}d_iF_i\sim_{\mathbb{Q}} g^*\Big(K_Y+E_Y+\lambda\epsilon_Y\mathcal{M}_Y+\mu\epsilon_X\mathcal{M}_X^Y\Big),
$$
where $d,d_1,\ldots,d_m$ are rational numbers that do not exceed $1$.
Denote by $D_W$ the boundary $\lambda\epsilon_Y\mathcal{M}_Y^W+\mu\epsilon_X\mathcal{M}_X^W+E_X^W+E_Y^W+\sum_{i=1}^{m}F_i$.
Then
\begin{multline*}
K_W+D_W\sim_{\mathbb{Q}}f^*\Big(K_X+E_X+\lambda\epsilon_Y\mathcal{M}_Y^X+\mu\epsilon_X\mathcal{M}_X\Big)+(1-c)E_Y^W+\sum_{i=1}^{m}(1-c_i)F_i\sim_{\mathbb{Q}}\\
\sim_{\mathbb{Q}}g^*\Big(K_Y+E_Y+\lambda\epsilon_Y\mathcal{M}_Y+\mu\epsilon_X\mathcal{M}_X^Y\Big)+(1-d)E_X^W+\sum_{i=1}^{m}(1-d_i)F_i.
\end{multline*}
Moreover, the log pair $(W,D_W)$ is log canonical,
since $W$ is smooth, the linear systems
$\mathcal{M}_Y^W$ and $\mathcal{M}_X^W$ are free from base points,
and the divisors $E_X^W$, $E_Y^W$, $F_1,\ldots,F_m$ form a simple normal crossing divisor.
Since $K_X+E_X+\lambda\epsilon_Y\mathcal{M}_Y^X+\mu\epsilon_X\mathcal{M}_X$ is $\phi$-ample,
it follows from \cite[Corollary~3.53]{KoMo98} that
the log pair
$(X,E_X+\lambda\epsilon_Y\mathcal{M}_Y^X+\mu\epsilon_X\mathcal{M}_X)$
is the canonical model of the log pair $(W,D_W)$.
Similarly, the log pair
$(Y,E_Y+\lambda\epsilon_Y\mathcal{M}_Y+\mu\epsilon_X\mathcal{M}_X^Y)$
is also the canonical model of the log pair $(W,D_W)$,
because $K_Y+E_Y+\lambda\epsilon_Y\mathcal{M}_Y+\mu\epsilon_X\mathcal{M}_X^Y$ is $\psi$-ample.
Since the canonical model is unique by \cite[Theorem~3.52]{KoMo98},
we see that $\rho$ is an isomorphism.
Since  $\rho$ is not an isomorphism by assumption, we obtain a contradiction.
This completes the proof of the lemma.
\end{proof}

Let $\lambda=\alpha(E_X,\mathrm{Diff}_{E_X}(0))$ and $\mu=\alpha(E_Y,\mathrm{Diff}_{E_Y}(0))$.
We may assume that the log pair $(X, E_X+\lambda\epsilon_Y\mathcal{M}_Y^X)$ is not log canonical.
Then $(E_X, \mathrm{Diff}_{E_X}(0)+\lambda\epsilon_Y\mathcal{M}_Y^X\vert_{E_X})$
is not log canonical
by Inversion of adjunction, see \cite[17.6]{Asterisque}.
On the other hand, we have
$$
\epsilon_Y\mathcal{M}_Y^X\Big\vert_{E_X}\sim_{\mathbb{Q}} -\Big(K_X+E_X\Big)\Big\vert_{E_X}
\sim_{\mathbb{Q}} -\Big(K_{E_X}+\mathrm{Diff}_{E_X}(0)\Big).
$$
This is impossible by the definition of the $\alpha$-invariant $\alpha(E_X,\mathrm{Diff}_{E_X}(0))$.

\bigskip

\textbf{Acknowledgements.}
This work was initiated in New York in August 2017 when the authors attended the Conference on Birational Geometry at the Simons Foundation.
We would like to thank the Simons Foundation for its hospitality.
The paper was written while the first author was visiting the Max Planck Institute for Mathematics.
He would like to thank the institute for the excellent working condition.
The second author was supported by IBS-R003-D1, Institute for Basic Science in Korea.
The third author was supported by the Russian Academic Excellence Project ``5-100'',
by RFBR grants \mbox{15-01-02164} and 15-01-02158, by
the Program of the Presidium of the Russian Academy of Sciences No.~01 ``Fundamental Mathematics and
its Applications'' under grant \mbox{PRAS-18-01}, and by Young Russian Mathematics award.

We are grateful to the referees for helpful comments on the earlier version of the paper.

\end{document}